\theoremstyle{plain}
\newtheorem{thm}{Theorem}
\newtheorem{lem}[thm]{Lemma}
\newtheorem{prop}[thm]{Proposition}
\newtheorem{cor}[thm]{Corollary}
\newcommand{\R}{\mathds{R}}
\DeclareMathOperator{\sgn}{sgn}
\begin{document}

\title{On weighted Poincar\'e inequalities}

\author{Bart{\l}omiej Dyda}
\author{Moritz Kassmann}

\address{B.D. and M.K.:\\Fakult\"{a}t f\"{u}r Mathematik\\Universit\"{a}t Bielefeld\\Postfach 100131\\D-33501 Bielefeld, Germany
\newline
B.D.:
\indent Institute of Mathematics and Computer Science, Wroc{\l}aw University of Technology,
Wybrze\.ze Wyspia\'nskiego 27,
50-370 Wroc{\l}aw, Poland
}
\email{bdyda (at) pwr wroc pl \qquad dyda (at) math uni-bielefeld de}
\email{moritz.kassmann (at) uni-bielefeld de}

\urladdr{www.im.pwr.wroc.pl/~bdyda/ \qquad www.math.uni-bielefeld.de/~kassmann}

\thanks{Both authors have been supported by the German Science Foundation DFG through \\ SFB 701. The first author was additionally supported by MNiSW grant N N201 397137.}

\keywords{Dirichlet forms, Sobolev spaces, Poincar\'e inequality, fractional Poincar\'e inequality}

\subjclass[2010]{Primary 35A23; Secondary 26D10, 26D15}
\date{January 21, 2013}

\begin{abstract}
The aim of this note is to show that Poincar\'e inequalities imply
corresponding weighted versions in a quite general setting.
Fractional Poincar\'e inequalities are considered, too.
 The proof is short
and does not involve covering arguments.
\end{abstract}

\maketitle

\section{Introduction}
Let $(X,\rho)$ be a~metric space with a~positive $\sigma$-finite Borel measure $dx$,
we will write $|E| = \int_E dx$ for the measure of a~Borel set $E\subset X$.
We fix some point $x_0\in X$ and set $B_r = \{ x\in X : \rho(x,x_0) < r\}$,
$\overline{B}_r = \{ x\in X : \rho(x,x_0) \leq r\}$.

We call a function $\phi:B_1 \to [0,\infty)$ a \emph{radially decreasing weight}, if
$\phi$ is a radial function, i.e. $\phi = \Phi(\rho(\cdot,x_0))$ and its profile
$\Phi$
is nonincreasing and right-continuous with left-limits. We assume that $\phi$ is
not identically zero on $B_1\setminus \overline{B}_{1/2}$.

For any such a weight $\phi$ there exists
  a positive, non-zero  $\sigma$-finite Borel measure $\nu$ on $(\frac{1}{2}, 1]$,
such that
\begin{equation}\label{eq:phi}
 \phi(x) =\int_{\rho(x,x_0) \vee 1/2}^1 \,\nu(dt) = \int_{1/2}^1 \chi_{B_t}(x)
\, \nu(dt), \quad x \in B_1\setminus \overline{B}_{1/2}.
\end{equation}
(Note that we put $\int_a^b f(t)\,\nu(dt)= \int_{(a,b]} f(t)\,\nu(dt)$.)

For a function $u$ we denote by
\[
 u_E = \frac{1}{|E|} \int_E u(x)\,dx
\]
the mean of $u$ over the set $E$, and by
\[
 u_E^\phi = \frac{\int_E u(x)\phi(x)\,dx}{\int_E \phi(x)\,dx}
\]
the mean of $u$ over the set $E\subset B_1$ with respect to the weight function $\phi$.

Our main result is the following:
\begin{thm}\label{thm:P-phi}
Let $1\leq p < \infty$ and let $\phi$ be a radially decreasing weight with $\phi =
\Phi(\rho(\cdot,x_0))$. Let $F:L^p(X) \times (\frac{1}{2}, 1] \to [0,\infty]$
be a functional satisfying 
\begin{align}
F(u+a, r) &= F(u,r), \quad a\in \R, \label{eq:constF}\\
 \int_{B_r} |u(x) - u_{B_r}|^p \,dx &\leq F(u, r), \label{eq:P}
\end{align}
for every $r\in(\frac{1}{2}, 1]$ and every $u\in L^p(X)$.
Then for $M=\frac{8^{p}|B_1|}{|B_{1/2}|} 
\frac{\Phi(0)}{\Phi(1/2)}$
\begin{equation}\label{eq:P-phi}
 \int_{B_1} |u(x) - u_{B_1}^\phi|^p \phi(x) \,dx \leq M
\int_{1/2}^1 F(u,t) \, \nu(dt)
\end{equation}
for every $u\in L^p(B_1)$, where $\nu$ is as in \eqref{eq:phi}.
\end{thm}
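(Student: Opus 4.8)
The plan is to dominate the weight $\phi$ by a constant multiple of the ``layer--cake'' function $x\mapsto\int_{1/2}^1\chi_{B_t}(x)\,\nu(dt)$, to expand the resulting integral by Fubini into an integral over $t$ of mean oscillations of $u$ on the balls $B_t$, and to estimate each such oscillation by $F(u,t)$ with the help of a single, carefully chosen reference constant; in this way no covering argument is needed.

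First I would reduce \eqref{eq:P-phi} to the inequality $\int_{B_1}|u-c|^p\phi\,dx\le C\int_{1/2}^1F(u,t)\,\nu(dt)$ for some constant $c=c(u)$. Indeed the weighted mean is affine, so $c-u_{B_1}^\phi=(c-u)_{B_1}^\phi$, whence Jensen's inequality gives $|c-u_{B_1}^\phi|^p\int_{B_1}\phi\,dx\le\int_{B_1}|u-c|^p\phi\,dx$; combined with $|u-u_{B_1}^\phi|^p\le2^{p-1}\bigl(|u-c|^p+|c-u_{B_1}^\phi|^p\bigr)$, integrated against $\phi$, this costs only the factor $2^p$. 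Next I claim that, for every $x\in B_1$,
\[
\phi(x)\le\frac{\Phi(0)}{\Phi(1/2)}\int_{1/2}^1\chi_{B_t}(x)\,\nu(dt);
\]
on $B_1\setminus\overline{B}_{1/2}$ this is \eqref{eq:phi} together with $\Phi(0)\ge\Phi(1/2)$, while on $\overline{B}_{1/2}$ the right--hand integral equals $\nu((1/2,1])=\Phi(1/2)$ and $\phi\le\Phi(0)$. Tonelli's theorem then yields
\[
\int_{B_1}|u-c|^p\phi\,dx\le\frac{\Phi(0)}{\Phi(1/2)}\int_{1/2}^1\Bigl(\int_{B_t}|u(x)-c|^p\,dx\Bigr)\nu(dt).
\]

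The decisive step is the choice $c=u_{B_{1/2}}$. For $t\in(\tfrac12,1]$ one has $B_{1/2}\subset B_t$, so Jensen's inequality and \eqref{eq:P} give
\[
|u_{B_{1/2}}-u_{B_t}|^p\le\frac{1}{|B_{1/2}|}\int_{B_{1/2}}|u-u_{B_t}|^p\,dx\le\frac{1}{|B_{1/2}|}\int_{B_t}|u-u_{B_t}|^p\,dx\le\frac{F(u,t)}{|B_{1/2}|}.
\]
Using this together with $\int_{B_t}|u-c|^p\,dx\le2^{p-1}\int_{B_t}|u-u_{B_t}|^p\,dx+2^{p-1}|B_t|\,|u_{B_t}-c|^p$, \eqref{eq:P} once more, and $|B_t|\le|B_1|$, $|B_{1/2}|\le|B_1|$, one obtains $\int_{B_t}|u-c|^p\,dx\le2^p\frac{|B_1|}{|B_{1/2}|}F(u,t)$. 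Substituting this into the displayed inequality and then undoing the reduction of the first step gives \eqref{eq:P-phi} with a constant of the claimed form (in fact this route yields $4^p$ in place of $8^p$, so there is a little room to spare).

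Finally, to invoke \eqref{eq:P} for $u\in L^p(B_1)$ one extends $u$ by zero to an element of $L^p(X)$, which alters none of the quantities $u_{B_r}$ and $\int_{B_r}|u-u_{B_r}|^p\,dx$ with $r\le1$. I expect the only genuine subtlety to lie in the bookkeeping around the reference ball: one must center at $B_{1/2}$ rather than at $B_1$, so that the error term $|u_{B_t}-u_{B_{1/2}}|^p$ is controlled by $F(u,t)$ with the \emph{same} parameter $t$ --- which is precisely what makes the $\nu$-integral close up.
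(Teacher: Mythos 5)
Your proof is correct, and it follows a genuinely different route from the paper. The paper normalizes $u^{\phi}_{B_1}=0$ (this is where \eqref{eq:constF} enters), splits the right-hand side into two halves $I_1+I_2$, and applies the mean-zero Lemma~\ref{lem:pzero} twice --- once to pass from $u^{\tilde\phi}_{B_1}$ to $u^{\phi}_{B_1}$ for the truncated weight $\tilde\phi=\phi\wedge\Phi(\tfrac12)$, and once in the $t$-variable with respect to the measure $|B_t|\,\nu(dt)$ to recapture $\int_{1/2}^1|u_{B_t}|^p|B_t|\,\nu(dt)$ from $I_2$. You instead dominate $\phi$ pointwise by $\frac{\Phi(0)}{\Phi(1/2)}\int_{1/2}^1\chi_{B_t}\,\nu(dt)$, unfold by Tonelli, and compare each $u_{B_t}$ with the single reference value $u_{B_{1/2}}$, using $B_{1/2}\subset B_t$ so that the error $|u_{B_t}-u_{B_{1/2}}|^p$ is controlled by $F(u,t)$ at the same parameter $t$; the reduction from $u^{\phi}_{B_1}$ to an arbitrary constant $c$ is done by Jensen rather than by Lemma~\ref{lem:pzero}. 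This buys you three things: the argument dispenses with Lemma~\ref{lem:pzero} altogether, it never uses the invariance hypothesis \eqref{eq:constF} (so it proves a slightly stronger statement), and it yields the constant $4^p\frac{|B_1|}{|B_{1/2}|}\frac{\Phi(0)}{\Phi(1/2)}$, better than the paper's $8^p$ factor; applied to the truncated weight it also recovers the paper's intermediate bound \eqref{eq:P-phi-const} with the same constant $2^{2p}|B_1|/|B_{1/2}|$. One small point to be aware of: your evaluation $\int_{1/2}^1\chi_{B_t}(x)\,\nu(dt)=\nu((\tfrac12,1])=\Phi(\tfrac12)$ on $\overline{B}_{1/2}$ rests on the right-continuity of $\Phi$ at $\tfrac12$ together with \eqref{eq:phi} (i.e.\ on the natural normalization of $\nu$); this is exactly the identification the paper itself uses when it treats $\tilde\phi$ as the layer-cake integral on $B_{1/2}$ in \eqref{eq:Azero}, so you are on the same footing as the authors, but it is worth stating explicitly. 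For the domination you only need $\nu((\tfrac12,1])\ge\Phi(\tfrac12)$, which makes the step robust.
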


By choosing the functional $F$ appropriately, \eqref{eq:P-phi} becomes a
Poincar\'e inequality with weight $\phi$, see Section \ref{sec:app}. Such inequalities have been studied
extensively because of their importance for the regularity theory of partial
differential equations, see the exposition in \cite{MR1872526}.

\section{Proof}
\begin{lem}\label{lem:pzero}
Let $\Omega$ be a finite measure space and $p \geq 1$. Assume $f\in L^p(\Omega)$
with $\int_\Omega f = 0$. Then
\[
 \| f + a \|_{L^p(\Omega)}  \geq \tfrac12 \| f \|_{L^p(\Omega)}
\]
for every $a\in \R$.
\end{lem}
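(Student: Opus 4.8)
The plan is to reduce the claim to two applications of the triangle inequality in $L^p(\Omega)$ together with a single use of Jensen's (equivalently Hölder's) inequality. The guiding observation is that the hypothesis $\int_\Omega f = 0$ pins down the constant $a$ as the average of $f+a$, and a constant function is controlled in $L^p$ by any function having it as its mean. We may clearly assume $0 < |\Omega| < \infty$, since otherwise $L^p(\Omega)$ is trivial and both sides vanish.

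First I would write, by the triangle inequality,
\[
\|f\|_{L^p(\Omega)} = \|(f+a) - a\|_{L^p(\Omega)} \leq \|f+a\|_{L^p(\Omega)} + |a|\,|\Omega|^{1/p},
\]
so that it suffices to show $|a|\,|\Omega|^{1/p} \leq \|f+a\|_{L^p(\Omega)}$. Since $\int_\Omega f\,dx = 0$ we have $\int_\Omega (f+a)\,dx = a\,|\Omega|$, and hence, using Jensen's inequality for the convex function $|\cdot|$ and then Hölder's inequality with exponents $p$ and $p/(p-1)$ (the case $p=1$ being immediate),
\[
|a|\,|\Omega| = \left| \int_\Omega (f+a)\,dx \right| \leq \int_\Omega |f+a|\,dx \leq \|f+a\|_{L^p(\Omega)}\,|\Omega|^{1-1/p}.
\]
Dividing by $|\Omega|^{1-1/p}$ yields $|a|\,|\Omega|^{1/p} \leq \|f+a\|_{L^p(\Omega)}$, and combining this with the previous display gives $\|f\|_{L^p(\Omega)} \leq 2\|f+a\|_{L^p(\Omega)}$, which is the assertion.

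There is no serious obstacle in this argument; the only points deserving a word of care are the degenerate cases of $|\Omega|$ (handled above) and the endpoint $p=1$, where the Hölder step is vacuous but the bound $|a|\,|\Omega| \leq \int_\Omega |f+a|\,dx$ still holds directly. It is worth remarking that the constant $\tfrac12$ is not optimal in general, but it is all that is needed for the proof of Theorem~\ref{thm:P-phi}.
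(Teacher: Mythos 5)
Your proof is correct, and it takes a genuinely different route from the paper. You use the standard ``mean is a near-optimal constant'' argument: write $f=(f+a)-a$, apply the triangle inequality, and then bound the constant via $|a|\,|\Omega| = \bigl|\int_\Omega (f+a)\bigr| \leq \int_\Omega |f+a| \leq \|f+a\|_{L^p(\Omega)}\,|\Omega|^{1-1/p}$, i.e.\ a single use of Jensen/H\"older against the indicator of $\Omega$; all steps check out, including the endpoint $p=1$ and the degenerate case $|\Omega|=0$, which you handle explicitly. The paper argues instead by a pointwise decomposition: assuming $a>0$, it splits $\Omega$ into $\{f>0\}$, $\{f<-2a\}$ and $\{-2a\le f\le 0\}$, compares $|f+a|^p$ with $|f|^p$ directly on the first two sets, and controls the middle piece using the cancellation $\int_{\{f\le 0\}}|f|=\int_{\{f>0\}}|f|$, which is where the hypothesis $\int_\Omega f=0$ enters; no duality or H\"older inequality is used. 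Both arguments yield the same constant $\tfrac12$ in the same generality. Yours is shorter and more transparent about why the factor $2$ appears (the mean minimizes the $L^p$-distance to constants up to a factor $2$); the paper's is more elementary in the tools it invokes but requires summing three case-by-case estimates. Either proof serves the application in Theorem~\ref{thm:P-phi} equally well.
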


\begin{proof}
We may assume $a>0$. Then
\begin{equation*}
 \int_{\Omega\cap \{ f>0 \}} |f+a|^p \geq \int_{\Omega\cap \{ f>0 \}} |f|^p \quad \text{ and } \quad 
  \int_{\Omega\cap \{ f<-2a \}} |f+a|^p \geq 2^{-p} \int_{\Omega\cap \{ f<-2a \}} |f|^p.
\end{equation*}
Furthermore, since $\int_{\Omega\cap \{  f \leq 0 \}}  |f| = \int_{\Omega\cap \{  f > 0 \}} |f|$, we obtain
\begin{align*}
 \int_{\Omega\cap \{ -2a \leq f \leq 0 \}}  |f|^p 
  &\leq (2a)^{p-1} \int_{\Omega\cap \{ -2a \leq f \leq 0 \}}  |f| 
  \leq (2a)^{p-1} \int_{\Omega\cap \{  f > 0 \}}  |f|  \leq  2^{p-1} \int_{\Omega\cap \{  f > 0 \}} |f+a|^p \,,
\end{align*}
where we use $a^{p-1} b \leq (b+a)^{p-1} (b+a)$ for positive $a,b$. Combining these observations we obtain the result.
\end{proof}

\begin{proof}[Proof of Theorem~\ref{thm:P-phi}]
First we observe that it is enough to prove that
\begin{equation}\label{eq:P-phi-const}
 \int_{B_1} |u(x) - u_{B_1}^{\tilde{\phi}}|^p \tilde{\phi}(x) \,dx 
 \leq \frac{2^{2p}|B_1|}{|B_{1/2}|} 
\int_{1/2}^1 F(u,t) \, \nu(dt),
\end{equation}
where $\tilde{\phi}(x) = \phi(x) \wedge \Phi(\tfrac{1}{2})$.
Indeed, we have
\[
 \frac{\Phi(\tfrac{1}{2})}{\Phi(0)} \phi(x) 
\leq \phi(x) \wedge \Phi(\tfrac{1}{2})
 \leq \phi(x).
\]
Hence if \eqref{eq:P-phi-const} holds, then
\begin{align*}
 \int_{B_1} |u(x) - u_{B_1}^{\tilde{\phi}}|^p \tilde{\phi}(x) \,dx 
&\geq
 \frac{\Phi(\tfrac{1}{2})}{\Phi(0)}
 \int_{B_1} |u(x) - u_{B_1}^{\tilde{\phi}}|^p \phi(x) \,dx  \\
&\geq
 \frac{\Phi(\tfrac{1}{2})}{\Phi(0)} 2^{-p}
 \int_{B_1} |u(x) - u_{B_1}^\phi|^p \phi(x) \,dx,
\end{align*}
where in the last line we have used Lemma~\ref{lem:pzero}.

Now we prove \eqref{eq:P-phi-const}. To simplify the notation,
we assume that $\phi(x) = \Phi(\tfrac{1}{2})$ for $x\in B_{1/2}$,
so that $\tilde{\phi} = \phi$.

Because of \eqref{eq:constF}, by subtracting a constant from $u$,
 we may and do assume that $u_{B_1}^\phi=0$, which means that
\begin{equation}\label{eq:Azero}
 0 = \int_{B_1} u(x)\phi(x)\,dx = \int_{1/2}^1 \int_{B_t} u(x)\,dx \, \,\nu(dt)
   = \int_{1/2}^1 u_{B_t} |B_t| \, \,\nu(dt).
\end{equation}
We start from the integral on the right hand side of \eqref{eq:P-phi} and use \eqref{eq:P}
\begin{align*}
 R &:= \int_{1/2}^1  F(u,t)\,\nu(dt)
  \geq  \int_{1/2}^1  \int_{B_t} |u(x)-u_{B_t}|^p \,dx \,\nu(dt) \\
&=  \frac{1}{2} \int_{1/2}^1  \int_{B_t} |u(x)-u_{B_t}|^p \,dx \,\nu(dt) +
 \frac{1}{2} \int_{B_1} \int_{1/2}^1  |u(x)-u_{B_t}|^p \chi_{B_t}(x) \,\nu(dt)\,dx\\
&=:I_1 + I_2
\end{align*}
(In fact $I_1=I_2$, but we treat them differently.)
We now deal with the inner integral in $I_2$.  For $x\in B_{1/2}$ we have
\[
\int_{1/2}^1  |u(x) - u_{B_t}|^p \chi_{B_t}(x) \,\nu(dt) 
 \geq 
\frac{1}{|B_1|} \int_{1/2}^1  |u(x) - u_{B_t}|^p |B_t| \,\nu(dt).
\]
Since $\int_{1/2}^1 u_{B_t} |B_t| \,\nu(dt) = 0$, by Lemma~\ref{lem:pzero}
we obtain
\[
  \int_{1/2}^1  |u(x) - u_{B_t}|^p |B_t| \,\nu(dt) \geq 2^{-p} \int_{1/2}^1  |u_{B_t}|^p |B_t| \,\nu(dt).
\]
Therefore
\begin{align*}
I_2 &\geq \frac{2^{-p}}{2|B_1|} \int_{B_{1/2}} \int_{1/2}^1 |u_{B_t}|^p |B_t| \,\nu(dt) \,dx = \frac{2^{-p} |B_{1/2}|}{2|B_1|}  \int_{1/2}^1 |u_{B_t}|^p |B_t|
\,\nu(dt).
\end{align*}
Using the inequality $|a|^p + |b|^p \geq 2^{1-p} |a+b|^p$ we obtain
\begin{align*}
 I_1+I_2 &\geq \frac{1}{2} 
   \int_{1/2}^1 \int_{B_t} \left( |u(x) - u_{B_t}|^p + \frac{2^{-p} |B_{1/2}|}{|B_1|} |u_{B_t}|^p \right) \,dx \,\nu(dt)\\
&\geq \frac{2^{-p} |B_{1/2}|}{2|B_1|} 2^{1-p}  \int_{1/2}^1  \int_{B_t} |u(x)|^p \,dx  \,\nu(dt) \\
&=  \frac{|B_{1/2}|}{|B_1|} 2^{-2p} \int_{B_1} |u(x)|^p \phi(x)\,dx
\end{align*}
and the proof is finished.
\end{proof}

\section{Applications}\label{sec:app}

Let us discuss some corollaries.
Corollary~\ref{thm:P-local} is well-known \cite{MR1872526}. Our approach allows for more general weights.
Proposition~\ref{thm:P-nonlocal} allows to deduce a weighted Poincar\'e
inequality for fractional Sobolev
norms from an unweighted version.
Corollaries~\ref{thm:P-nonlocal-k} and
\ref{thm:P-CKK} give a~more concrete result
 for fractional Sobolev norms. The first allows for more general
kernels and exponents $p$. Corollary \ref{thm:P-CKK} improves \cite[Theorem
5.1]{CKK} because the result is robust for $s\to 1-$ and allows for general
weights and exponents $p$.

\begin{cor}\label{thm:P-local}
Let $p\geq 1$ and $\phi$ be a radially decreasing weight. Consider $X=\R^d$ equipped
with the Lebesgue measure and the Euclidean metric.
There exists a positive constant $C$ depending on $p, d$ and $\phi$ such that
\begin{equation}\label{eq:P-local-phi}
 \int_{B_1} |u(x) - u_{B_1}^\phi|^p \phi(x) \,dx \leq C
    \int_{B_1} |\nabla u(x)|^p \phi(x) \,dx,
\end{equation}
for every $u\in W^{1,p}(B_1)$.
\end{cor}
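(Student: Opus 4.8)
The plan is to apply Theorem~\ref{thm:P-phi} with a functional built from the classical unweighted Poincar\'e inequality on balls. Recall that on $\R^d$ there is a constant $c = c(p,d)$ such that
\[
 \int_{B_r} |u(x) - u_{B_r}|^p \, dx \le c \, r^p \int_{B_r} |\nabla u(x)|^p \, dx
\]
for every $r > 0$ and every $u \in W^{1,p}(B_r)$; this is obtained from the Poincar\'e inequality on the unit ball by translation and dilation. I would therefore take
\[
 F(u,r) = c \, r^p \int_{B_r} |\nabla u(x)|^p \, dx ,
\]
with the convention $F(u,r) = +\infty$ when $\nabla u \notin L^p(B_r)$. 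This quantity depends only on the restriction of $u$ to $B_r$, and since $\nabla(u+a) = \nabla u$ it satisfies \eqref{eq:constF}; inequality \eqref{eq:P} is exactly the displayed Poincar\'e inequality. Hence Theorem~\ref{thm:P-phi} applies and yields, for every $u \in W^{1,p}(B_1)$,
\[
 \int_{B_1} |u(x) - u_{B_1}^\phi|^p \phi(x)\,dx \le M c \int_{1/2}^1 t^p \int_{B_t} |\nabla u(x)|^p \, dx\,\nu(dt) \le M c \int_{1/2}^1 \int_{B_t} |\nabla u(x)|^p \, dx\,\nu(dt),
\]
where the last inequality uses $t \le 1$ for $t \in (\tfrac{1}{2}, 1]$.

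The remaining task is to recover the weight $\phi$ on the right-hand side. By Tonelli's theorem,
\[
 \int_{1/2}^1 \int_{B_t} |\nabla u(x)|^p \, dx\,\nu(dt) = \int_{B_1} |\nabla u(x)|^p \, \psi(x)\,dx, \qquad \psi(x) := \int_{1/2}^1 \chi_{B_t}(x)\,\nu(dt).
\]
By \eqref{eq:phi} one has $\psi = \phi$ on $B_1 \setminus \overline{B}_{1/2}$. On $\overline{B}_{1/2}$ one has $\chi_{B_t}(x) = 1$ for all $t \in (\tfrac{1}{2}, 1]$, so $\psi(x) = \nu((\tfrac{1}{2}, 1]) = \lim_{s \downarrow 1/2} \Phi(s) = \Phi(\tfrac{1}{2})$ by right-continuity of $\Phi$ together with \eqref{eq:phi}, while $\phi(x) = \Phi(\rho(x,x_0)) \ge \Phi(\tfrac{1}{2})$ because $\Phi$ is nonincreasing. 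Thus $\psi \le \phi$ on all of $B_1$, and combining the last two displays gives \eqref{eq:P-local-phi} with $C = Mc$. Since $|B_1|/|B_{1/2}| = 2^d$ in $\R^d$, the constant $C$ depends only on $p$, $d$ and the ratio $\Phi(0)/\Phi(\tfrac{1}{2})$ entering $M$.

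I do not expect a genuinely hard step. The two points that deserve a little care are (i) choosing $F$ so that it is truly invariant under adding constants — which is why one must phrase it through $\nabla u$ rather than $u$ itself — while keeping it finite on $W^{1,p}(B_1)$, and (ii) the evaluation of $\psi$ on the inner ball $\overline{B}_{1/2}$, which uses right-continuity of the profile $\Phi$. The scaling factor $r^p \le 1$ is what lets the Poincar\'e constant be pulled out uniformly in $r$; retaining it would only sharpen $C$ and is not needed here.
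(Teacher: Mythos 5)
Your proposal is correct and follows essentially the same route as the paper: take $F(u,r)=c\,r^p\int_{B_r}|\nabla u|^p\,dx$ (with value $+\infty$ off $W^{1,p}$), apply Theorem~\ref{thm:P-phi}, bound $t^p\le 1$, and use Tonelli together with the representation \eqref{eq:phi} to recover the weight, arriving at $C=Mc$ exactly as in the paper. Your explicit verification that $\int_{1/2}^1\chi_{B_t}(x)\,\nu(dt)\le\phi(x)$ on $\overline{B}_{1/2}$ is a small point the paper glosses over (writing an equality where only the inequality is needed), and it is handled correctly.
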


\begin{prop}\label{thm:P-nonlocal}
Let $p\geq 1$ and let $\phi$ be a radially decreasing weight of the form $\phi=
\Phi(\rho(\cdot,x_0))$. Assume that for some kernel $k:B_1\times B_1 \to
[0,\infty)$ and some positive constant $C$ the following inequality holds 
\begin{equation}\label{eq:P-nonlocal}
 \int_{B_r} |u(x) - u_{B_r}|^p \,dx \leq C  \int_{B_r} \!\int_{B_r} |u(x)-u(y)|^p k(x,y)  \,dy\,dx,
\end{equation}
whenever $r\in (\tfrac{1}{2}, 1]$ and $u\in L^p(X)$. Then with 
$M=\frac{8^{p}|B_1|}{|B_{1/2}|} \frac{\Phi(0)}{\Phi(1/2)}$ 
\begin{equation}\label{eq:P-nonlocal-phi}
 \int_{B_1} |u(x) - u_{B_1}^\phi|^p \phi(x) \,dx 
    \leq C M 
    \int_{B_1} \!\int_{B_1} |u(x)-u(y)|^p k(x,y) 
       (\phi(y) \wedge \phi(x)) \,dy\,dx
\end{equation}
for  $u\in L^p(X)$.
\end{prop}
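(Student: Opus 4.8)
The plan is to deduce Proposition~\ref{thm:P-nonlocal} from Theorem~\ref{thm:P-phi} by choosing the functional $F$ in a way that simultaneously encodes the nonlocal Dirichlet-type energy on balls and, crucially, survives the reduction to the \emph{minimum weight} $\phi(x)\wedge\phi(y)$ that appears on the right-hand side of \eqref{eq:P-nonlocal-phi}. The natural candidate is
\begin{equation*}
 F(u,r) := C \int_{B_r}\!\int_{B_r} |u(x)-u(y)|^p k(x,y)\,dy\,dx ,
\end{equation*}
which is translation invariant in $u$, hence satisfies \eqref{eq:constF}, and satisfies \eqref{eq:P} by hypothesis \eqref{eq:P-nonlocal}. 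Applying Theorem~\ref{thm:P-phi} with this $F$ gives
\begin{equation*}
 \int_{B_1} |u(x)-u_{B_1}^\phi|^p \phi(x)\,dx \leq C M \int_{1/2}^1 \int_{B_t}\!\int_{B_t} |u(x)-u(y)|^p k(x,y)\,dy\,dx\,\nu(dt),
\end{equation*}
so the whole task reduces to bounding the triple integral on the right by $\int_{B_1}\!\int_{B_1}|u(x)-u(y)|^p k(x,y)\,(\phi(x)\wedge\phi(y))\,dy\,dx$.

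The key step is a Fubini/Tonelli rearrangement: swapping the order of integration, the inner condition ``$x,y\in B_t$'' is equivalent to ``$t>\rho(x,x_0)\vee\rho(y,x_0)$'', and therefore
\begin{equation*}
 \int_{1/2}^1 \chi_{B_t}(x)\chi_{B_t}(y)\,\nu(dt) = \int_{\rho(x,x_0)\vee\rho(y,x_0)\vee 1/2}^1 \nu(dt).
\end{equation*}
By the defining formula \eqref{eq:phi} for the weight, the right-hand side equals $\Phi\big(\rho(x,x_0)\vee\rho(y,x_0)\vee\tfrac12\big)$, and since $\Phi$ is nonincreasing this is exactly $\phi(x)\wedge\phi(y)$ (once one extends $\phi$ on $B_{1/2}$ by the constant value $\Phi(1/2)$, consistently with the convention used in the proof of Theorem~\ref{thm:P-phi}; note $\phi(x)\wedge\phi(y)\geq\Phi(1/2)>0$ there, so the extension only matters on a set where the claimed bound is trivially respected). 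Interchanging the $t$-integral with the $x,y$-integrals — legitimate because the integrand is nonnegative and all measures are $\sigma$-finite — then yields precisely
\begin{equation*}
 \int_{1/2}^1 \int_{B_t}\!\int_{B_t} |u(x)-u(y)|^p k(x,y)\,dy\,dx\,\nu(dt) = \int_{B_1}\!\int_{B_1} |u(x)-u(y)|^p k(x,y)\,(\phi(x)\wedge\phi(y))\,dy\,dx,
\end{equation*}
and combining with the displayed consequence of Theorem~\ref{thm:P-phi} gives \eqref{eq:P-nonlocal-phi} with the stated constant $CM$.

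I expect no serious obstacle here; the one point requiring a little care is the identification of $\int_{\rho(x,x_0)\vee\rho(y,x_0)\vee 1/2}^1 \nu(dt)$ with $\phi(x)\wedge\phi(y)$ across the region $B_{1/2}$, where $\phi$ was only defined a priori on $B_1\setminus\overline{B}_{1/2}$ via \eqref{eq:phi}. As in the proof of the theorem one simply fixes the convention $\phi\equiv\Phi(1/2)$ on $B_{1/2}$; then the formula $\phi = \int_{\rho(\cdot,x_0)\vee 1/2}^1\nu(dt)$ and hence the minimum identity hold on all of $B_1$, and the Tonelli swap closes the argument.
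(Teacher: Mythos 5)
Your proposal is correct and follows essentially the same route as the paper: the same choice $F(u,r)=C\int_{B_r}\int_{B_r}|u(x)-u(y)|^p k(x,y)\,dy\,dx$, followed by Tonelli and the identification $\int_{1/2}^1\chi_{B_t}(x)\chi_{B_t}(y)\,\nu(dt)=\phi(x)\wedge\phi(y)$ (up to the harmless convention on $B_{1/2}$, which you handle more explicitly than the paper does). Nothing further is needed.
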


\begin{cor}\label{thm:P-nonlocal-k}
Let $\phi$ be a radially decreasing weight of the form $\phi= \Phi(\rho(\cdot,x_0))$
and $p \geq 1$. Let $k:B_1\times B_1 \to [0,\infty)$ be a kernel satisfying $k
\geq
c$ for some constant $c>0$. There is a positive constant $M$ depending on $d, p$
and $\Phi$ such that for $u\in L^p(X)$
\begin{equation}\label{eq:P-nonlocal-k-phi}
 \int_{B_1} |u(x) - u_{B_1}^\phi|^p \phi(x) \,dx 
    \leq \frac{M}{c} 
    \int_{B_1} \!\int_{B_1} |u(x)-u(y)|^p k(x,y) 
       (\phi(y) \wedge \phi(x)) \,dy\,dx
\end{equation}
for $u\in L^p(X)$.
\end{cor}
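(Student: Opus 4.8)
The plan is to obtain Corollary~\ref{thm:P-nonlocal-k} as an immediate consequence of Proposition~\ref{thm:P-nonlocal}. The key observation is that the hypothesis $k \geq c$ on $B_1 \times B_1$ means that $k$ serves as a kernel for which the unweighted fractional Poincar\'e inequality \eqref{eq:P-nonlocal} holds, up to a harmless constant. So first I would recall (or cite) that the \emph{unweighted} Poincar\'e inequality on a ball holds for the constant kernel: there is a constant $C_0 = C_0(d,p)$ such that for every $r \in (\tfrac12, 1]$ and every $u \in L^p(X)$,
\[
 \int_{B_r} |u(x) - u_{B_r}|^p \, dx \leq C_0 \int_{B_r} \!\int_{B_r} |u(x) - u(y)|^p \, dy \, dx.
\]
This is the standard Poincar\'e inequality in the form $\int_{B_r}|u - u_{B_r}|^p \leq \frac{C}{|B_r|}\int_{B_r}\int_{B_r}|u(x)-u(y)|^p$, and one checks that the prefactor $|B_r|^{-1}$ is bounded by $|B_{1/2}|^{-1}$ uniformly in $r \in (\tfrac12,1]$, so $C_0$ depends only on $d$ and $p$.

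Next I would combine this with $k(x,y) \geq c$: since the integrand $|u(x)-u(y)|^p \geq 0$, we get
\[
 \int_{B_r}\!\int_{B_r} |u(x)-u(y)|^p \, dy \, dx \leq \frac{1}{c} \int_{B_r}\!\int_{B_r} |u(x)-u(y)|^p k(x,y) \, dy \, dx,
\]
so that \eqref{eq:P-nonlocal} holds with the constant $C_0/c$ in place of $C$, for all $r \in (\tfrac12,1]$. Now I simply apply Proposition~\ref{thm:P-nonlocal} with this kernel $k$ and this constant $C = C_0/c$. The conclusion \eqref{eq:P-nonlocal-phi} then reads exactly as \eqref{eq:P-nonlocal-k-phi} with $M$ replaced by $C_0 \cdot \tfrac{8^p |B_1|}{|B_{1/2}|}\cdot\tfrac{\Phi(0)}{\Phi(1/2)}$ divided by $c$; setting $M := C_0 \cdot \tfrac{8^p |B_1|}{|B_{1/2}|}\cdot\tfrac{\Phi(0)}{\Phi(1/2)}$, which depends only on $d$, $p$, and $\Phi$, finishes the proof.

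There is essentially no obstacle here; the only thing to be careful about is that the unweighted fractional Poincar\'e inequality with the constant kernel genuinely holds with a constant uniform over $r \in (\tfrac12,1]$ — this is why one wants the ball radii bounded away from $0$, and it is precisely the role played by the normalization built into the definition of a radially decreasing weight (non-vanishing on $B_1 \setminus \overline{B}_{1/2}$). If one prefers not to invoke the classical Poincar\'e inequality as a black box, it can be proved directly by Jensen's inequality: $|u(x) - u_{B_r}|^p = \bigl| \tfrac{1}{|B_r|}\int_{B_r}(u(x)-u(y))\,dy\bigr|^p \leq \tfrac{1}{|B_r|}\int_{B_r}|u(x)-u(y)|^p\,dy$, and then integrate in $x$ over $B_r$. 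This makes Corollary~\ref{thm:P-nonlocal-k} self-contained modulo Proposition~\ref{thm:P-nonlocal}.
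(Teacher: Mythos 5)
Your proposal is correct and takes essentially the same route as the paper: both derive the unweighted inequality $\int_{B_r}|u-u_{B_r}|^p\,dx \le \frac{1}{c\,|B_{1/2}|}\int_{B_r}\!\int_{B_r}|u(x)-u(y)|^p k(x,y)\,dy\,dx$ uniformly for $r\in(\tfrac12,1]$ and then apply Proposition~\ref{thm:P-nonlocal}. The only cosmetic difference is that you obtain this step via Jensen's inequality while the paper integrates the pointwise bound $|a+b|^p\ge |a|^p+pb|a|^{p-1}\sgn(a)$ in $y$, yielding the same constant; note only that the uniformity in $r$ comes simply from $|B_r|\ge |B_{1/2}|$, not from the weight's normalization on $B_1\setminus\overline{B}_{1/2}$.
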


\begin{cor}\label{thm:P-CKK}
Let $p\geq 1$, $R\geq 1$ and $0<s_0 \leq s <1$. Consider $X=\R^d$ equipped
with the Lebesgue measure and the Euclidean metric. Let $\phi$ be
a~radially decreasing
weight of the form $\phi = \Phi(|\cdot|)$.
Then there exists a positive constant $C$ depending on $p,d, s_0$ and $\Phi$
such that 
\begin{equation}\label{eq:P-CKK}
 \int_{B_1} |u(x) - u_{B_1}^\phi|^p \phi(x) \,dx 
  \leq C (1-s)\ R^{p(1-s)}
    \int_{B_1} \!\int_{B_1} \frac{|u(x)-u(y)|^p}{ |x-y|^{d+ps}} 
    \, \chi_{\{|x-y|\leq \frac{1}{R}\}}  (\phi(y) \wedge \phi(x)) \,dy\,dx
\end{equation}
for all $u\in L^p(B_1)$.
\end{cor}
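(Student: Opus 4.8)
The plan is to apply Proposition~\ref{thm:P-nonlocal} with the kernel
\[
 k(x,y)=(1-s)\,R^{p(1-s)}\,|x-y|^{-d-ps}\,\chi_{\{|x-y|\le 1/R\}},\qquad x,y\in B_1 .
\]
For this $k$ the inequality \eqref{eq:P-nonlocal-phi} furnished by the proposition is precisely \eqref{eq:P-CKK}, its constant being $C\,M$ with $C$ the constant of hypothesis \eqref{eq:P-nonlocal} and $M=\frac{8^{p}|B_1|}{|B_{1/2}|}\frac{\Phi(0)}{\Phi(1/2)}=2^{d}8^{p}\frac{\Phi(0)}{\Phi(1/2)}$ (here $|B_1|/|B_{1/2}|=2^{d}$); this is of the form claimed in the statement. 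So it remains to check hypothesis \eqref{eq:P-nonlocal} for this $k$, i.e.\ the \emph{unweighted} inequality
\begin{equation}\label{eq:CKK-aux}
 \int_{B_r}|u(x)-u_{B_r}|^{p}\,dx
 \le C_0\,(1-s)\,R^{p(1-s)}\int_{B_r}\!\int_{B_r}\frac{|u(x)-u(y)|^{p}}{|x-y|^{d+ps}}\,\chi_{\{|x-y|\le 1/R\}}\,dy\,dx ,
\end{equation}
for every $r\in(\tfrac12,1]$, with a constant $C_0=C_0(d,p,s_0)$ independent of $s\in[s_0,1)$ and of $R\ge1$.

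I would deduce \eqref{eq:CKK-aux} from two facts. The first is the robust fractional Poincar\'e inequality on the ball $B_r$ (whose diameter is comparable to $1$),
\[
 \int_{B_r}|u(x)-u_{B_r}|^{p}\,dx
 \le C_1(d,p,s_0)\,(1-s)\int_{B_r}\!\int_{B_r}\frac{|u(x)-u(y)|^{p}}{|x-y|^{d+ps}}\,dy\,dx ,
\]
whose constant stays bounded as $s\to1^{-}$; this is classical, the factor $(1-s)$ being exactly the renormalisation of the Gagliardo seminorm present in the Bourgain--Brezis--Mironescu limit. The second is the truncation estimate
\begin{equation}\label{eq:CKK-truncation}
 \int_{B_r}\!\int_{B_r}\frac{|u(x)-u(y)|^{p}}{|x-y|^{d+ps}}\,dy\,dx
 \le C_2(d,p)\,R^{p(1-s)}\int_{B_r}\!\int_{B_r}\frac{|u(x)-u(y)|^{p}}{|x-y|^{d+ps}}\,\chi_{\{|x-y|\le 1/R\}}\,dy\,dx ,
\end{equation}
again with $C_2$ independent of $s$ and of $R$. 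Multiplying the two displays and using $R^{p(1-s)}\ge1$ gives \eqref{eq:CKK-aux} with $C_0=C_1C_2$, and then \eqref{eq:P-CKK} follows from Proposition~\ref{thm:P-nonlocal}.

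The substantive step is \eqref{eq:CKK-truncation}, which I would prove by a polar reduction. Put
\[
 g(\varrho):=\int_{S^{d-1}}\int_{B_r\cap(B_r-\varrho\theta)}|u(x+\varrho\theta)-u(x)|^{p}\,dx\,d\theta ,\qquad \varrho\in(0,2r] .
\]
By Fubini, $\int_{B_r}\!\int_{B_r}|x-y|^{-d-ps}|u(x)-u(y)|^{p}\,dy\,dx=\int_{0}^{2r}g(\varrho)\,\varrho^{-1-ps}\,d\varrho$, and restricting the $\varrho$-integral to $(0,1/R)$ gives the same identity for the right-hand side of \eqref{eq:CKK-truncation} (note $1/R\le1\le 2r$). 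So \eqref{eq:CKK-truncation} amounts to the one-variable inequality $\int_{1/R}^{2r}g(\varrho)\,\varrho^{-1-ps}\,d\varrho\le C_2\,R^{p(1-s)}\int_{0}^{1/R}g(\varrho)\,\varrho^{-1-ps}\,d\varrho$. The only structural input is the doubling property $g(2\varrho)\le 2^{p}g(\varrho)$ (triangle inequality in $L^{p}$ together with the convexity of $B_r$: if $x$ and $x+2\varrho\theta$ lie in $B_r$ then so does the midpoint $x+\varrho\theta$). Used twice, it does the job. First, iterating it and changing variables $\varrho\mapsto 2^{-j}\varrho$ bounds the dyadic shell $\int_{2^{j-1}/R}^{2^{j}/R}g(\varrho)\,\varrho^{-1-ps}\,d\varrho$ by $2^{jp(1-s)}$ times the bottom shell $\int_{1/(2R)}^{1/R}g(\varrho)\,\varrho^{-1-ps}\,d\varrho$, so summing over the finitely many $j$ with $2^{j}\lesssim R$ contributes a factor $\lesssim R^{p(1-s)}/(1-s)$. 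Second, iterating it on the scales below $1/R$ gives $\int_{1/(2^{k+1}R)}^{1/(2^{k}R)}g(\varrho)\,\varrho^{-1-ps}\,d\varrho\ge 2^{-kp(1-s)}\int_{1/(2R)}^{1/R}g(\varrho)\,\varrho^{-1-ps}\,d\varrho$, hence the bottom shell makes up only an $O(1-s)$-fraction of the truncated seminorm $\int_{0}^{1/R}g(\varrho)\,\varrho^{-1-ps}\,d\varrho$. Multiplying, the two factors $(1-s)^{\pm1}$ cancel and one is left with $C_2=C_2(d,p)$ bounded (in fact of order $16^{p}$), uniformly in $s\in[s_0,1)$ and $R\ge1$. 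The only real work lies in carrying out this elementary but careful bookkeeping. (One may instead quote \eqref{eq:CKK-aux} --- a robust, scale-truncated fractional Poincar\'e inequality on balls --- from the literature; it is a mild strengthening of \cite[Theorem~5.1]{CKK}.)
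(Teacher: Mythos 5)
Your proposal is correct, and its skeleton is the one the paper uses: verify the hypothesis \eqref{eq:P-nonlocal} of Proposition~\ref{thm:P-nonlocal} for the truncated kernel $k(x,y)=(1-s)R^{p(1-s)}|x-y|^{-d-ps}\chi_{\{|x-y|\le 1/R\}}$ by combining the robust fractional Poincar\'e inequality (the paper quotes it from Ponce and Bourgain--Brezis--Mironescu, exactly the ``classical'' fact you invoke) with a truncation estimate comparing the full Gagliardo form on a ball to its short-range part. The genuine difference is how that truncation estimate is proved. The paper's Lemma~\ref{lem:cut} is a chaining argument: the segment from $x$ to $y$ is subdivided into $n\approx 2R$ equal pieces $A_k$, the elementary inequality $|\sum_{k}a_k|^p\le n^{p-1}\sum_k|a_k|^p$ is applied, and a change of variables with Jacobian $n^{-d}$ yields the factor $(3R)^{p(1-s)}$ in a few lines. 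You instead pass to polar coordinates, exploit the doubling property $g(2\varrho)\le 2^p g(\varrho)$ (which, as you note, only needs convexity of the ball), and sum dyadic shells above and below the scale $1/R$; the two divergent factors $(1-s)^{\pm1}$ cancel, leaving a constant of order $8^p$ uniformly in $s\in[s_0,1)$ and $R\ge 1$ --- I checked the bookkeeping and it goes through. Your route is somewhat longer but has the small advantage of being stated directly on $B_r$ for all $r\in(\tfrac12,1]$, whereas the paper's Lemma~\ref{lem:cut} is formulated on $B_1$ and one must observe that its proof (or a rescaling) applies verbatim on $B_r$; both arguments are elementary and produce constants of the same shape. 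One caveat: your parenthetical suggestion to instead ``quote \eqref{eq:CKK-aux} from the literature'' via \cite[Theorem~5.1]{CKK} is not reliable --- that theorem is the non-robust, $p=2$, specific-weight statement this corollary is meant to improve --- but your main argument does not depend on it.
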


\begin{proof}[Proof of Corollary~\ref{thm:P-local}]
It is well-known that the following Poincar\'e inequality holds
\begin{equation}\label{eq:P-local}
 \int_{B_r} |u(x) - u_{B_r}|^p \,dx \leq c \ r^p
    \int_{B_r} |\nabla u(x)|^p \,dx
\end{equation}
for every $u\in W^{1,p}(B_r)$ and $r >0$ where $c>0$ depends on $p$ and $d$. Set 
\[
 F(u,r) = c \ r^p \int_{B_r} |\nabla u(x)|^p\,dx,
\]
for $u \in  W^{1,p}(B_1)$ and $F(u,r)=\infty$ otherwise. Then for $u\in  W^{1,p}(B_1)$ 
\begin{align*}
\int_{1/2}^1 F(u,t) \, \nu(dt)
 &= c \int_{1/2}^1 t^p \int_{B_1} |\nabla u(x)|^p \chi_{B_t}(x) \,dx \, \nu(dt) \\ &\leq
c \int_{B_1} |\nabla u(x)|^p \int_{1/2}^1 \chi_{B_t}(x) \, \nu(dt) \,dx = c \int_{B_1} |\nabla u(x)|^p \phi(x) \,dx.
\end{align*}

By Theorem~\ref{thm:P-phi} the
assertion follows with
$C = 2^{3p+d} \frac{\Phi(0)}{\Phi(1/2)} c$.
\end{proof}

\begin{proof}[Proof of Proposition~\ref{thm:P-nonlocal}]
Let
\[
 F(u,r) = C \int_{B_r} \!\int_{B_r} |u(x)-u(y)|^p k(x,y)\,dy\,dx.
\]
Then
\begin{align*}
\int_{1/2}^1 F(u,t) \, \nu(dt)
 &= C \int_{1/2}^1 \int_{B_1} \!\int_{B_1} |u(x)-u(y)|^p k(x,y) 
  \chi_{B_t}(y) \chi_{B_t}(x)\,dy\,dx \,\nu(dt) \\
 &=  C \int_{B_1} \!\int_{B_1} |u(x)-u(y)|^p k(x,y) \int_{1/2}^1\chi_{B_t}(y) \chi_{B_t}(x)\,\nu(dt)
 \,dy\,dx \\
 &=  C \int_{B_1} \!\int_{B_1} |u(x)-u(y)|^p k(x,y) (\phi(y) \wedge \phi(x)) \,dy\,dx.
\end{align*}
The assertion now follows from Theorem~\ref{thm:P-phi}.
\end{proof}

\begin{proof}[Proof of Corollary~\ref{thm:P-nonlocal-k}]
First we use a well-known argument to obtain a nonweighted Poincar\'e inequality.
By calculus and convexity of the function
 $x\mapsto |x|^p$
we conclude that $|a+b|^p \geq |a|^p + b p|a|^{p-1} \sgn(a)$.
Thus
\begin{align*}
\int_{B_r} \!\int_{B_r} |u(x)-u(y)|^p k(x,y) \,dy\,dx
 &\geq 
 c \int_{B_r} \!\int_{B_r} |(u(x)-u_{B_r}) + (u_{B_r} - u(y))|^p \,dy\,dx\\
&\geq c |B_r| \int_{B_r} |u(x)-u_{B_r}|^p\,dx \\
&\geq c |B_{1/2}| \int_{B_r} |u(x)-u_{B_r}|^p\,dx,
\end{align*}
whenever $u \in L^p(B_r)$ and $\tfrac{1}{2} < r \leq 1$.

The assertion follows now from Proposition~\ref{thm:P-nonlocal}.
\end{proof}

In the proof of Corollary~\ref{thm:P-CKK} we use the following auxiliary result.
\begin{lem}\label{lem:cut}
Let $R\geq 1$, $p\geq 1$ and $0<s<1$. Then
\begin{equation}
 \int_{B_1} \! \int_{B_1} \frac{|u(x)-u(y)|^p}{|x-y|^{d+ps}} \,dy\,dx
 \leq (3R)^{p(1-s)}
 \int_{B_1} \! \int_{B_1} \frac{|u(x)-u(y)|^p}{|x-y|^{d+ps}} \chi_{\{|x-y|\leq
\frac{1}{R}\}} \,dy\,dx
\end{equation}
for all $u\in L^p(B_1)$.
\end{lem}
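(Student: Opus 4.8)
The plan is to split the integral over $B_1 \times B_1$ into the region where $|x-y| \le \frac{1}{R}$ (which is already what appears on the right-hand side) and the complementary region where $|x-y| > \frac{1}{R}$, and to show that the latter is controlled by a constant multiple of the former. So the real task is to bound
\[
 J_{\text{far}} := \int_{B_1}\!\int_{B_1} \frac{|u(x)-u(y)|^p}{|x-y|^{d+ps}}\, \chi_{\{|x-y| > \frac{1}{R}\}}\,dy\,dx
\]
by $C \cdot J_{\text{near}}$ with $J_{\text{near}}$ the right-hand side integral and $C$ of the claimed order $(3R)^{p(1-s)}$.

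First I would note that on the far region the kernel is harmless: since $|x-y| \le 2$ on $B_1 \times B_1$, we have $|x-y|^{-(d+ps)} \le |x-y|^{-d} R^{ps} \cdot (R|x-y|)^{-ps} \cdot (\text{stuff})$ — more cleanly, I would use $|x-y|^{-(d+ps)} \le (2R)^{d+ps} \cdot R^{-(d+ps)} \cdots$; the point is that on the far region $|x-y|^{-(d+ps)} \le C(R,d,p)$ is bounded, so $J_{\text{far}} \le C \int_{B_1}\!\int_{B_1} |u(x)-u(y)|^p\,dy\,dx$. Next, the standard chaining trick: for $x, y \in B_1$ with $|x-y| > \frac{1}{R}$, I connect $x$ to $y$ by a chain $x = z_0, z_1, \dots, z_N = y$ of points in $B_1$ with consecutive distances $|z_{i-1} - z_i| \le \frac{1}{R}$ and $N \le C R$ steps (possible since $\operatorname{diam} B_1 = 2$ and $R \ge 1$; in $\R^d$ one can take the points along the segment from $x$ to $y$). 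By the triangle inequality and convexity of $t \mapsto |t|^p$,
\[
 |u(x)-u(y)|^p \le N^{p-1} \sum_{i=1}^N |u(z_{i-1}) - u(z_i)|^p.
\]
Integrating this and using that each consecutive pair has distance at most $\frac{1}{R}$, so the near-indicator is satisfied and $|z_{i-1}-z_i|^{-(d+ps)}$ is comparable after inserting it, one gets $J_{\text{far}} \le N^{p-1} \cdot N \cdot (\text{const}) \cdot J_{\text{near}} \lesssim R^p J_{\text{near}}$, and chasing the exponents carefully (the gain $R^{-ps}$ from the near kernel versus $R^{d}$ from the domains and $R^p$ from the chain) should yield precisely the factor $(3R)^{p(1-s)}$.

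The main obstacle is making the chaining argument quantitatively sharp enough to land on the exact constant $(3R)^{p(1-s)}$ rather than some cruder power of $R$: one must average the chained inequality over the intermediate points $z_i$ (integrating each $z_i$ over a ball of radius $\sim \frac{1}{R}$ and dividing by its volume $\sim R^{-d}$) so that each term $\int |u(z_{i-1}) - u(z_i)|^p\,dz_{i-1}\,dz_i$ genuinely reproduces a piece of $J_{\text{near}}$ weighted by $|z_{i-1}-z_i|^{d+ps}/|z_{i-1}-z_i|^{d+ps}$, and the bookkeeping of how the $R$-powers from $N \sim R$, from the $R^d$ normalizations, and from the kernel scale $R^{ps}$ combine is where the factor $p(1-s) = p - ps$ in the exponent emerges. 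I expect the argument to be elementary but the constant-tracking to be the delicate part.
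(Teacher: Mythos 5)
There is a genuine gap, and it sits exactly where you defer the work: the exponent bookkeeping. Your chaining idea (about $2R$ equally spaced points on the segment from $x$ to $y$, the convexity bound $|u(x)-u(y)|^p \le N^{p-1}\sum_{i}|u(z_{i-1})-u(z_i)|^p$, and a change of variables turning each term into a piece of the short-range integral) is indeed the mechanism of the paper's proof. But the paper never splits into near and far pairs and, crucially, never discards the kernel: with $n$ chosen so that $2R\le n<2R+1\le 3R$ and $A_k=\tfrac{k}{n}y+\tfrac{n-k}{n}x$, one keeps $|x-y|^{-(d+ps)}$ inside the chained integral and substitutes $\tilde x=A_{k-1}$, $\tilde y=A_k$, a linear map with $d\tilde y\,d\tilde x=n^{-d}\,dy\,dx$ and $|x-y|=n|\tilde x-\tilde y|$. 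The kernel then contributes the factor $n^{-(d+ps)}$, whose $n^{-d}$ part cancels the Jacobian factor $n^{d}$ exactly, and the remaining $n^{-ps}$ combines with $n^{p-1}$ (convexity) and $n$ (number of terms) to give $n^{p(1-s)}\le(3R)^{p(1-s)}$; the indicator $\chi_{\{|\tilde x-\tilde y|\le 1/R\}}$ comes for free because $|\tilde x-\tilde y|\le 2/n\le 1/R$ and $\tilde x,\tilde y\in B_1$.

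In your route the first step, bounding $|x-y|^{-(d+ps)}\le R^{d+ps}$ on the far region and reducing to $\int_{B_1}\int_{B_1}|u(x)-u(y)|^p\,dy\,dx$, destroys the scaling that produces the exponent $p(1-s)$: when you later reinsert the kernel on the near pairs you only gain $1\le R^{-(d+ps)}|\tilde x-\tilde y|^{-(d+ps)}$, the two powers $R^{\pm(d+ps)}$ cancel, and together with the Jacobian you are left with a constant of order $N^{p-1}\cdot N\cdot N^{d}\approx R^{p+d}$, not $(3R)^{p(1-s)}$. Your own heuristic tally ($R^{p}$ from the chain, $R^{d}$ from the normalizations, $R^{-ps}$ from the kernel) already gives $R^{p+d-ps}$, overshooting the claimed constant by $R^{d}$, and the proposal never resolves this; the phrase ``should yield precisely the factor $(3R)^{p(1-s)}$'' is exactly the missing step. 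The fix is to do the chaining with the kernel still in place, use the deterministic segment points (no averaging over auxiliary balls is needed, since the $A_k$ lie in $B_1$ by convexity), and let the kernel's scaling $n^{-(d+ps)}$ absorb the Jacobian $n^{d}$; note also that no near/far decomposition is needed, since this argument bounds the full double integral at once.
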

\begin{proof}
Let $n$ be a natural number such that $n\geq 2R > n-1$. We introduce
\[
 A_k = A_k(x,y) = \tfrac{k}{n} y + \tfrac{n-k}{n} x, \quad k=0,1,\ldots n.
\]
Then 
\begin{align*}
I&=  \int_{B_1} \! \int_{B_1} \frac{|u(x)-u(y)|^p}{|x-y|^{d+ps}} \,dy\,dx =
 \int_{B_1} \! \int_{B_1} \frac{| \sum_{k=1}^n
(u(A_{k-1})-u(A_k))|^p}{|x-y|^{d+ps}} \,dy\,dx \\
&\leq  n^{p-1} \sum_{k=1}^n \int_{B_1} \! \int_{B_1} \frac{|
u(A_{k-1})-u(A_k)|^p}{|x-y|^{d+ps}} \,dy\,dx.
\end{align*}
Note that $|A_{k-1}-A_k| = \tfrac{1}{n} |x-y|$. If we substitute
$\tilde{x} = A_{k-1}$, $\tilde{y} = A_{k}$,
then $d\tilde{y}\,d\tilde{x} = n^{-d} dy\,dx$ (which follows by an elementary calculation,
see also \cite[page 570]{Dyda2}). Moreover, $\tilde{x}$, $\tilde{y} \in B_1$ with
 $|\tilde{x} - \tilde{y}| \leq \tfrac{2}{n} \leq \tfrac{1}{R}$. Hence
\begin{align*}
I&\leq n^{p-ps} \int_{B_1} \! \int_{B_1} \frac{| u(\tilde{x})-u(\tilde{y})|^p}{|\tilde{x}-\tilde{y}|^{d+ps}}
   \chi_{\{|\tilde{x}-\tilde{y}|\leq \frac{1}{R}\}} \,d\tilde{y}\,d\tilde{x}.
\end{align*}
Since $n< 2R+1 \leq 3R$, the assertion follows.
\end{proof}

\begin{proof}[Proof of Corollary~\ref{thm:P-CKK}]
From \cite{Ponce04} and \cite[page 80]{BBM02} we know that there exists a
constant $C=C(p,d,s_0)$, such that for $s_0\leq s <1$
\begin{equation}\label{eq:P-robust}
\int_{B_r} |u(x) - u_{B_r}|^p \,dx \leq C (1-s) r^{ps} \int_{B_r} \!\int_{B_r} \frac{|u(x)-u(y)|^p}{|x-y|^{d+ps}} \,dy\,dx,
\end{equation}
for all $u\in L^p(B_1)$.
 The assertion now follows from  \eqref{eq:P-robust},
 Proposition~\ref{thm:P-nonlocal} and Lemma~\ref{lem:cut}.
\end{proof}

\def\cprime{$'$}


\begin{thebibliography}{1}

\bibitem{BBM02}
J.~Bourgain, H.~Brezis, and P.~Mironescu.
\newblock Limiting embedding theorems for {$W^{s,p}$} when {$s\uparrow1$} and
  applications.
\newblock {\em J. Anal. Math.}, 87:77--101, 2002.
\newblock Dedicated to the memory of Thomas H. Wolff.

\bibitem{CKK}
Z.-Q. Chen, P.~Kim, and T.~Kumagai.
\newblock Weighted {P}oincar\'e inequality and heat kernel estimates for finite
  range jump processes.
\newblock {\em Math. Ann.}, 342(4):833--883, 2008.

\bibitem{Dyda2}
B.~Dyda.
\newblock On comparability of integral forms.
\newblock {\em J. Math. Anal. Appl.}, 318(2):564--577, 2006.

\bibitem{Ponce04}
A.~C. Ponce.
\newblock An estimate in the spirit of {P}oincar\'e's inequality.
\newblock {\em J. Eur. Math. Soc. (JEMS)}, 6(1):1--15, 2004.

\bibitem{MR1872526}
L.~Saloff-Coste.
\newblock {\em Aspects of {S}obolev-type inequalities}, volume 289 of {\em
  London Mathematical Society Lecture Note Series}.
\newblock Cambridge University Press, Cambridge, 2002.

\end{thebibliography}
\end{document}